\theoremstyle{plain} 
\newtheorem{theorem}{\indent\bf Theorem}[section]
\newtheorem{corollary}[theorem]{\indent\bf Corollary}
\newtheorem{conjecture}[theorem]{\indent\bf Conjecture}
\theoremstyle{definition} 
\newtheorem{definition}[theorem]{\indent\bf Definition}
\newtheorem{example}[theorem]{\indent\bf Example}
\newcommand{\ddbar}{\partial \overline{\partial}}
\newcommand{\dbar}{\overline{\partial}}
\newcommand{\ai}{\sqrt{-1}}
\newcommand{\C}{\mathbb{C}}
\newcommand{\N}{\mathbb{N}}
\newcommand{\B}{\mathbb{B}}
\newcommand{\dl}{\mathrm{d}\lambda}
\newcommand{\dV}{\mathrm{d}V}
\newcommand{\dz}{\mathrm{d}z}
\newcommand{\dbarz}{\mathrm{d}\bar{z}}
\newcommand{\id}{\mathrm{id}}
\newcommand{\Rea}{\mathrm{Re}}
\newcommand{\Ima}{\mathrm{Im}}
\newcommand{\supp}{\mathrm{supp}}
\begin{document}
\pagestyle{plain}
\thispagestyle{plain}

\title[]
{Singular Hermitian metrics with isolated singularities}

\author[T. INAYAMA]{Takahiro INAYAMA}
\address{Department of Mathematics\\
Faculty of Science and Technology\\
Tokyo University of Science\\
2641 Yamazaki, Noda\\
Chiba, 278-8510\\
Japan
}
\email{inayama\_takahiro@ma.noda.tus.ac.jp}
\email{inayama570@gmail.com}
\subjclass[2020]{32L10, 32W05}
\keywords{ 
singular Hermitian metric, multiplier ideal sheaf, $L^2$-estimate, Griffiths positivity.
}

\begin{abstract}
In this paper, we study the coherence of a higher rank analogue of a multiplier ideal sheaf.
Key tools of the study are H\"ormander's $L^2$-estimate and a singular version of a Demailly--Skoda type result.  
\end{abstract}


\maketitle
\setcounter{tocdepth}{2}

\section{Introduction}

Multiplier ideal sheaves for singular Hermitian metrics on line bundles, which was introduced by Nadel \cite{Nad90}, are very important and have many applications in many fields. 
Nadel established the following celebrated result: a multiplier ideal sheaf associated with a plurisubharmonic function is coherent. 
An important technique of the proof is an $L^2$-estimate of the $\dbar$-equation of H\"ormander type \cite{Hor65}. 

A higher rank analogue of a multiplier ideal sheaf associated with a singular Hermitian metric on a vector bundle has been also studied.
This notion has been recognized as important.
However, the coherence of it is only known in few cases (cf. \cite[Proposition 4.1.3]{deC98}, \cite[Theorem 1.1]{Hos17}). 
If a metric has some strong positivity like Nakano positivity, it is known that the higher rank analogue of the multiplier ideal sheaf is coherent (\cite[Theorem 1.2]{Iwa21}, \cite[Theorem 1.4]{HI20}, \cite[Proposition 4.4]{Ina20}).
Hence, it is natural to ask whether the higher rank analogue of the multiplier ideal sheaf is coherent if the associated metric has only Griffiths positivity, which is a strictly weaker notion than Nakano positivity.

\begin{conjecture}\label{conj:coherence}
	Let $(E, h)$ be a holomorphic vector bundle over an $n$-dimensional complex manifold $X$ with a singular Hermitian metric $h$. 
	If $h$ is Griffiths semi-positive, the higher rank analogue of the multiplier ideal sheaf $\mathcal{E}(h)$ is coherent. 
\end{conjecture}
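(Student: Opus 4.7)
The plan is to adapt Nadel's coherence proof for multiplier ideal sheaves of psh functions to the vector bundle setting, bridging the gap between Griffiths and Nakano positivity via a singular Demailly--Skoda type theorem. Since coherence is local, fix $x_0 \in X$ and a small coordinate polydisc $U \ni x_0$ on which $E$ is trivial. Consider the Hilbert space $\mathcal{H}$ of holomorphic sections $s \in H^0(U, E)$ with $\int_U |s|_h^2 \, dV < \infty$, and take a countable orthonormal basis $\{s_i\}$. The $\mathcal{O}_U$-submodule $\mathcal{F} \subset \mathcal{E}(h)|_U$ generated by the $\{s_i\}$ is coherent, so by Krull's intersection theorem applied at $x_0$ it suffices to show that for every germ $f \in \mathcal{E}(h)_{x_0}$ and every integer $k \geq 1$ there exists $g \in \mathcal{H}$ with $f - g \in \mathfrak{m}_{x_0}^k \mathcal{E}(h)_{x_0}$.

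To produce such a $g$, I would follow the standard Nadel/H\"ormander truncation scheme: cut off $f$ by a bump function $\chi$ equal to $1$ near $x_0$ and correct $\chi f$ to a global holomorphic section by solving $\dbar u = \dbar(\chi f)$ with a weighted $L^2$-estimate using an auxiliary strictly psh weight of the form $N\log|z - x_0|^2$ with $N$ large in terms of $k$. The singular weight forces $u$ to vanish to a prescribed order at $x_0$, so $g := \chi f - u$ lies in $\mathcal{H}$ and agrees with $f$ modulo $\mathfrak{m}_{x_0}^k$. The obstacle is that for $E$-valued $(n,1)$-forms the Bochner--Kodaira--Nakano identity requires Nakano semi-positivity of $h$, whereas we assume only Griffiths semi-positivity.

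To circumvent this, I would invoke a singular version of the Demailly--Skoda theorem, asserting that $(E \otimes \det E, h \otimes \det h)$ is Nakano semi-positive in a suitable generalized sense whenever $(E,h)$ is Griffiths semi-positive. Solving the $\dbar$-equation on $E \otimes \det E$ with the Nakano $L^2$-estimate and then transporting back through a local trivialization $\det E|_U \simeq \mathcal{O}_U$ (under which $\det h$ becomes a genuine psh weight) should yield the required weighted solvability for $E$-valued forms on $U$. Combining this solvability with the truncation scheme above produces the sought-after $g$ and completes the proof.

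The step I expect to be the main obstacle is the rigorous deployment of the singular Demailly--Skoda theorem together with its merger into the $L^2$-machinery. In the smooth setting Demailly--Skoda is a pointwise linear-algebraic identity, but when $h$ is only upper semi-continuous with a curvature current one must approximate $h$ from above by a decreasing sequence of smooth Griffiths semi-positive metrics $h_\nu$, apply the smooth Demailly--Skoda pointwise to $h_\nu \otimes \det h_\nu$, and then pass to the limit in the $L^2$-estimate. Controlling this limit, most likely via monotone convergence together with strong openness-type arguments, and matching the approximate solutions cleanly with the Nadel truncation step so that they converge to an admissible correction for the original $h$, is where the heart of the work lies.
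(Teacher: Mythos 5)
What you are trying to prove is stated in the paper as a conjecture, not a theorem, and the paper does not claim a proof; it only proves a conditional version assuming the unbounded locus $L(\det h)$ of $\det h$ is discrete. Your plan does mirror that conditional proof's strategy --- local trivialization, a Nadel-style truncation with a singular auxiliary weight $N\log|z-x_0|^2$, solving $\dbar$ with $L^2$-estimates after a singular Demailly--Skoda reduction of Griffiths to Nakano positivity, and finishing with Artin--Rees and Nakayama --- and the step you flag as ``the main obstacle'' (the singular Demailly--Skoda estimate itself) is in fact carried out in the paper by approximating $h$ from above by smooth Griffiths semi-positive metrics on a Stein exhaustion, applying the smooth Demailly--Skoda theorem, and passing to a weak limit via monotone convergence. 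The genuine gap in your proposal lies elsewhere.

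The obstacle is the extra $\det h$ weight that the Demailly--Skoda reduction necessarily imposes. After the reduction, the $\dbar$-solvability you obtain for an $E$-valued $(n,1)$-form comes with the weight $|\cdot|^2_{h}\,\det h\,e^{-\psi}$, not $|\cdot|^2_{h}\,e^{-\psi}$. The left-hand side is unproblematic: $\log\det h^\star$ is psh and hence bounded above on compacts, so $\det h=e^{-\log\det h^\star}$ is bounded below, and a finite weighted norm of the solution $\alpha$ yields a finite $L^2_h$-norm. But the right-hand side requires
\[
\int_U |\dbar\chi|^2\,|f|^2_h\,\det h\,e^{-\psi}\,\dl<+\infty,
\]
whereas $f\in\mathcal{E}(h)_{x_0}$ only gives $\int_U|f|^2_h\,\dl<+\infty$. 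Since $\log\det h^\star$ may tend to $-\infty$, the factor $\det h$ can blow up on $\supp(\dbar\chi)$, and there is no a priori control on $\int|f|^2_h\det h$ there. Trivializing $\det E$ does not remove this factor; it merely rewrites $\det h$ as $e^{-\phi}$ with $\phi$ psh, and $e^{-\phi}$ remains in the integrand. This is precisely why the paper's main theorem imposes discreteness of $L(\det h)$: with a single unbounded point per chart, $\chi$ can be chosen so that $\dbar\chi$ is supported where $\log\det h^\star$ is bounded, making the right-hand side finite. Without that hypothesis your truncation step fails, and your argument as written only recovers the paper's partial result, not the full conjecture.
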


This conjecture seems a tough problem due to the following reasons.
First, we cannot apply an $L^2$-estimate of the $\dbar$-equation directly even if $h$ is smooth.
That is because a kind of the solvability of the $\dbar$-equation with $L^2$-estimates in the optimal setting is equivalent to the Nakano positivity of the metric (\cite[Theorem 1.1]{DNWZ20}).
Second, it might be useful to get an approximating sequence $\{ h_\nu\}$ of $h$ such that the Chern curvature of $h_\nu$ is uniformly bounded below in the sense of Nakano. 
However, it is known that this procedure cannot be done simply by using the standard approximation defined by convolution (\cite[Theorem 1.2]{Hos17}).

In order to use the technique of H\"ormander's $L^2$-estimate, we need to devise some settings. 
By imposing conditions on the singularity of $h$, we can give a partial answer to the conjecture. 

\begin{theorem}\label{thm:coherence}
	Let $(E, h)$ be a holomorphic vector bundle over an $n$-dimensional complex manifold $X$ with a Griffiths semi-positive singular Hermitian metric $h$.
	If the unbounded locus $L(\det h)$ of $\det h$ is discrete, the higher rank analogue of the multiplier ideal sheaf $\mathcal{S}^m\mathcal{E}(S^mh)$ is coherent for every $m\in \N$.
	Here $\mathcal{S}^m\mathcal{E}(S^mh)$ is the sheaf of germs of local holomorphic sections of $S^mE$, which is square integrable with respect to $S^mh$ $($see Definition \ref{def:multiplier} for the precise definition$)$. 
\end{theorem}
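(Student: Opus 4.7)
The plan is to follow Nadel's classical template for coherence of multiplier ideal sheaves: produce enough global $L^2$ holomorphic sections on small Stein open sets to generate the stalks of $\mathcal{S}^m\mathcal{E}(S^m h)$ modulo arbitrary powers of the maximal ideals, then conclude via the strong Noetherian property and Krull's intersection theorem. The new analytic input, which I expect to be supplied by the singular version of Demailly--Skoda alluded to in the abstract, is that Griffiths semi-positivity of $h$ upgrades to Nakano semi-positivity of $S^m h \otimes \det h$ on $S^m E \otimes \det E$; this is exactly what makes an H\"ormander-type $L^2$-estimate available on the vector bundle side.

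First I would fix $x_0 \in X$, choose a polydisc $B \ni x_0$ on which $E$ and $\det E$ are trivial and $L(\det h)\cap B \subset \{x_0\}$, and write $\det h = e^{-\psi}$ in the chosen frame of $\det E$ with $\psi$ plurisubharmonic. A standard fact I will use is that Griffiths semi-positivity of $h$ together with local boundedness of $\det h$ on $B\setminus\{x_0\}$ forces $h$, and hence $S^m h$, to be locally bounded there. Given $y\in B$, $k\in\N$ and a germ $f\in\mathcal{S}^m\mathcal{E}(S^m h)_y$ represented by a holomorphic $\tilde f$ on an open $V\Subset B$ with $\int_V|\tilde f|^2_{S^m h}\,\dV<\infty$, I would cut off by a smooth $\chi$ equal to $1$ near $y$ and supported in $V$ (arranged so that $\supp\dbar\chi$ avoids both $y$ and $x_0$), set $v := (\dbar\chi)\tilde f$ and $\phi_k := 2(n+k)\log|z-y|$, and solve $\dbar u = v$ on $B$ with the weighted bound
\[
\int_B|u|^2_{S^m h}\,e^{-\psi-\phi_k}\,\dV \;\le\; C\int_B|v|^2_{S^m h}\,e^{-\psi-\phi_k}\,\dV.
\]
Under the local trivialization of $\det E$, the pair $(S^m E\otimes\det E,\, S^m h\otimes\det h)$ becomes $(S^m E,\, e^{-\psi}S^m h)$, which is Nakano semi-positive by the singular Demailly--Skoda result; adding the psh weight $\phi_k$ preserves semi-positivity, and the corresponding singular H\"ormander estimate supplies the displayed bound (with an auxiliary strictly psh regularization if necessary).

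On $\supp\dbar\chi$, all of $|\dbar\chi|^2$, $\phi_k$, $\psi$, and $|\tilde f|^2_{S^m h}$ are bounded by the setup above, so the right-hand side is finite. The weight $e^{-\phi_k}$ then forces $u$ to vanish at $y$ to order at least $k$, making $\tilde F := \chi\tilde f - u$ a holomorphic section of $S^m E$ on $B$ that represents $f$ modulo $\mathfrak{m}_y^k$; and since $\psi$ is locally bounded above, an ordinary $L^2$-bound for $u$ with respect to $S^m h$ follows on every $B'\Subset B$, so $\tilde F\in\mathcal{S}^m\mathcal{E}(S^m h)(B')$. Let $\mathcal{F}\subset\mathcal{S}^m\mathcal{E}(S^m h)$ be the $\mathcal{O}_B$-subsheaf generated by the Hilbert space of all such global $L^2$-sections; it is coherent by the strong Noetherian property of submodules of $\mathcal{O}(S^m E)$. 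The previous step yields $\mathcal{S}^m\mathcal{E}(S^m h)_y \subset \mathcal{F}_y + \mathfrak{m}_y^k\cdot\mathcal{O}(S^m E)_y$ for every $y\in B$ and every $k\in\N$, and Krull's intersection theorem applied to the finitely generated module $\mathcal{O}(S^m E)_y/\mathcal{F}_y$ gives $\mathcal{F}_y = \mathcal{S}^m\mathcal{E}(S^m h)_y$, proving coherence.

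The hard part will be the singular Demailly--Skoda step together with the accompanying singular $L^2$-estimate. The smooth Demailly--Skoda passage from Griffiths to Nakano positivity is classical, but in the singular setting the naive smoothing by convolution fails to preserve Griffiths/Nakano positivity (as the introduction notes through \cite{Hos17}); the isolated-singularity hypothesis on $\det h$ is presumably what allows one to approximate $h$ well enough for a viable singular Demailly--Skoda to go through, since away from the isolated bad points one controls $h$ from $\det h$, and at those points the weight $e^{-\psi}$ absorbs the singular behavior. Once this analytic input is in place, the remaining steps are fairly standard adaptations of Nadel's argument.
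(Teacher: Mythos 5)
Your proof sketch is correct and follows essentially the same route as the paper: reduce to a trivializing polydisc, invoke the singular Demailly--Skoda $L^2$-estimate (Theorem~\ref{thm:symmetricl2}) for $S^mh\otimes\det h$, solve $\dbar$ with the singular weight $\phi_k$, and close via the strong Noetherian property plus Krull (the paper uses Artin--Rees plus Nakayama, which is equivalent).

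One substantive misconception worth correcting appears in your final paragraph. You suggest that the isolated-singularity hypothesis on $\det h$ is ``presumably what allows one to approximate $h$ well enough for a viable singular Demailly--Skoda to go through.'' That is not where the hypothesis enters. The singular Demailly--Skoda $L^2$-estimate (Theorem~\ref{thm:symmetricl2}) holds for \emph{any} Griffiths semi-positive singular metric on the trivial bundle over a Stein manifold: Griffiths semi-positive metrics admit an increasing approximation by smooth Griffiths semi-positive metrics (Berndtsson--P\u{a}un, Raufi), which is enough to run the classical Demailly--Skoda theorem on the approximants and pass to a weak limit; no discreteness of $L(\det h)$ is needed there, and what \cite{Hos17} obstructs is \emph{Nakano} approximation, not Griffiths. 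The discreteness of $L(\det h)$ is used exactly where you in fact use it in the body of your argument, namely to guarantee that the right-hand side $\int_B |v|^2_{S^mh}\,e^{-\psi-\phi_k}\,\dV$ is finite: the Demailly--Skoda twist forces the weight $e^{-\psi}=\det h$ into the estimate, and $\det h$ blows up precisely on $L(\det h)$, so one needs to be able to shrink $B$ so that $L(\det h)\cap B$ is at most a single point avoidable by $\supp\dbar\chi$. Also, as a minor writeup point, you should work on a slightly larger polydisc $\widetilde B\Supset B$ from the start so that the solution $\tilde F$ is genuinely $L^2$ on $B$ (as the paper does with $\Delta^n_{r'}\supset\Delta^n_r$) rather than only on $B'\Subset B$; otherwise $\tilde F$ may not lie in the generating Hilbert space you use to define $\mathcal{F}$.
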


The condition on $h$ in the main theorem appears naturally when $h$ has some kind of symmetry. 
Indeed, as an application of the main theorem, we get the following result. 

\begin{corollary}\label{cor:sphere}
		Let 
		$E$ be the trivial vector bundle $E=\B^n\times \C^r$ over the unit ball $\B^n:=\{ (z_1, \cdots, z_n)\in \C^n\mid |z|^2<1\}$ and $h$ be a Griffiths semi-negative singular Hermitian metric on $E$.
		If $\det h$ is a radial function, that is, depending only on the radius $|z|$, $\mathcal{S}^m\mathcal{E}^\star(S^mh^\star)$ is coherent for every $m\in \N$. 
		Especially, if $h$ is \textit{spherically symmetric} $($see Definition \ref{def:sphere}$)$, $\mathcal{S}^m\mathcal{E}^\star(S^mh^\star)$ is coherent for every $m\in \N$. 
\end{corollary}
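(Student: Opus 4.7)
The plan is to reduce the statement to Theorem \ref{thm:coherence} applied to the dual bundle $(E^\star, h^\star)$. Since $h$ is Griffiths semi-negative, the dual metric $h^\star$ on $E^\star\cong \B^n\times\C^r$ is Griffiths semi-positive (indeed, this is typically the very definition of negativity for singular Hermitian metrics), and the same holds for $S^m h^\star$ on $S^m E^\star$ for every $m\in\N$. Consequently, the only remaining hypothesis of Theorem \ref{thm:coherence} to verify is that the unbounded locus $L(\det h^\star)$ is discrete.

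The key step is to observe that the metric induced by $h^\star$ on the line bundle $\det E^\star$ is a semi-positively curved singular Hermitian metric. In the trivial frame its local weight equals $-\log\det h^\star=\log\det h$, which is therefore plurisubharmonic on $\B^n$. By hypothesis $\det h$ depends only on $|z|$, so $\log\det h = u(|z|)$ is a radial plurisubharmonic function. The submean value inequality applied over Euclidean spheres forces such a $u$ to be a convex non-decreasing function of $\log|z|$, which is in particular continuous on the interval $(0,1)$. Hence $\log\det h$ is locally bounded on $\B^n\setminus\{0\}$, so $L(\det h^\star)\subseteq\{0\}$ is discrete, and Theorem \ref{thm:coherence} delivers the coherence of $\mathcal{S}^m\mathcal{E}^\star(S^m h^\star)$ for every $m\in\N$.

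For the final sentence, I would argue that spherical symmetry forces $\det h$ to be radial. Under Definition \ref{def:sphere}, $h$ is invariant under a natural action of the unitary group that rotates the base $\B^n$ (together with a corresponding representation on the fibers); passing to the top exterior power absorbs the fiber action into a character of modulus one, so $\det h$ is itself $U(n)$-invariant and hence a function of $|z|$ alone. The first part of the corollary then applies.

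The main obstacle I anticipate is conceptual rather than computational: one has to recognize that the strong-looking hypothesis ``$\det h$ is radial'' translates, via the Griffiths positivity of the induced determinant line-bundle metric, into the \emph{single} quantitative input required by Theorem \ref{thm:coherence}, namely discreteness of the unbounded locus. Once plurisubharmonicity of $\log\det h$ and the standard structure of radial psh functions are combined, the reduction to the main theorem is essentially immediate.
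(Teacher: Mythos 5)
Your reduction to Theorem~\ref{thm:coherence} applied to $(E^\star, h^\star)$ is exactly what the paper does, and your verification that $L(\det h^\star)$ is discrete when $\det h$ is radial is essentially identical: $\log\det h$ is plurisubharmonic (this is the local weight of the semi-negatively curved line-bundle metric $\det h$, and also equals $-\log\det h^\star$), and a radial psh function on $\B^n$ is a convex function of $\log|z|$, hence locally bounded on $\B^n\setminus\{o\}$. The paper makes this explicit by restricting to the punctured complex line $\{(e^w,0,\dots,0):\,\Rea\,w<0\}$, but the content is the same.

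For the final sentence, your reasoning invokes a structure that is not present in Definition~\ref{def:sphere}. That definition says only that for each \emph{fixed} vector $u\in\C^r$ the scalar function $z\mapsto |u|^2_{h(z)}$ is radial; there is no ``corresponding representation on the fibers'' to absorb into a determinant character. What the definition gives directly, by polarization (choose $u,v$ and compare $|u|^2_h$, $|v|^2_h$, $|u+v|^2_h$, $|u+\ai v|^2_h$), is that every entry $h_{i\bar j}(z)$ depends only on $|z|$ --- that is, $h$ is $U(n)$-invariant with the \emph{trivial} action on fibers. From this, $\det h$ is radial immediately. The paper carries out precisely this polarization computation. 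Your appeal to a fiber representation and a modulus-one character is harmless but a detour; as stated, it misreads the hypothesis, and a reader would reasonably ask you what that representation is. Replace it with the polarization argument and the proof is correct and complete.
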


\vskip3mm
{\bf Acknowledgment. } 
The author would like to thank Prof. Shin-ichi Matsumura and Dr. Genki Hosono for valuable discussions and helpful comments. 
This work is supported by JSPS KAKENHI Grant-in-Aid for Research Activity Start-up: 21K20336.

\section{Preliminaries}\label{sec:prel}

In this section, we give properties of singular Hermitian metrics on holomorphic vector bundles. 
First, let us recall positivity notions for smooth Hermitian vector bundles. 
Let $(E,h)$ be a smooth Hermitian vector bundle over a complex manifold $X$. 
We denote by $\Theta_{E,h}$ the Chern curvature of $(E, h)$ and by $\widetilde{\Theta}_{E,h}$ the associated Hermitian form on $T_X\otimes E$. 
Then $(E,h)$ is said to be \textit{Nakano positive} if $\widetilde{\Theta}_{E,h}(\tau, \tau)>0$ for all non-zero elements $\tau\in T_X\otimes E$.
If $\widetilde{\Theta}_{E,h}(v\otimes s, v\otimes s)>0$ for all non-zero elements $v\in T_X$ and $s\in E$, $(E,h)$ is said to be \textit{Griffiths positive}. 
Corresponding negativity is defined similarly.
It is clear that Nakano positivity is a stronger positivity notion than Griffiths positivity. 
It is also known that these notions do not coincide \cite[Chapter VII, Example (8.4)]{DemCom}.
For Hermitian forms $A$ and $B$ on $T_X\otimes E$, we write $A\geq_{\mathrm{Nak}.}B$ (respectively, $A\geq_{\mathrm{Grif}.}B$) if $A(\tau,\tau)\geq B(\tau,\tau)$ (respectively, $A(v\otimes s, v\otimes s)\geq B(v\otimes s, v\otimes s)$) for any $\tau \in T_X\otimes E$ (respectively, $v\in T_X$ and $s\in E$). 

Next, we show positivity notions for singular Hermitian metrics. 
For the definition of singular Hermitian metrics, see \cite[Section 3]{BP08} or \cite[Definition 1.1]{Rau15}. 
\begin{definition}$($\cite[Definition 3.1 and 3.2]{BP08}, \cite[Section 2]{Rau15}$)$.\label{def:singularGri}
	Let $(E,h)$ be a \textit{singular} Hermitian bundle over a complex manifold $X$. 
	Then $(E,h)$ is said to be:
	\begin{enumerate}
		\item \textit{Griffiths semi-negative} if $\log |s|^2_h$ (or $|s|^2_h$) is plurisubharmonic for any local holomorphic section $s$ of $E$.
		\item \textit{Griffiths semi-positive} if the dual metric $h^\star$ on $E^\star$ is \textit{Griffiths semi-negative}.
	\end{enumerate}
\end{definition}

Then we introduce the definition of unbounded loci. 
Let $\varphi$ be a plurisubharmonic function on a complex manifold $X$.
It is known that the \textit{unbounded locus} of $\varphi$ is defined to be the set of points $x\in X$ such that $\varphi$ is unbounded on every neighborhood of $x$.
We denote it by $L(\varphi)$. 
For a general singular Hermitian metric on a line bundle, the unbounded locus is defined similarly.
\begin{definition}(cf. \cite[Definition 3.3]{LRRS18}).\label{def:unboundedlocus}
	Let $(L,g)$ be a singular Hermitian line bundle over a complex manifold $X$. 
	Suppose that $g$ is semi-negative.
	The \textit{unbounded locus} $L(g)$ of $g$ is defined as follows: $x\in X$ is in $L(g)$ if and only if for an open coordinate $U_\alpha$ of $x$ which trivializes $(L|_{U_\alpha}\cong \C, g|_{U_\alpha}=e^{\varphi_\alpha})$, $x \in  L(\varphi_\alpha)$. 
	
	Similarly, if $g$ is semi-positive, the \textit{unbounded locus} $L(g)$ of $g$ is defined by $L(g):=L(g^\star)$.
\end{definition}

	Note that if $(L, g)$ has semi-negative curvature, the above $\varphi_\alpha$ is a plurisubharmonic function. Thus $L(\varphi_\alpha)$ is well-defined. 
	For another trivialization $U_\beta$, we have that 
	$\varphi_\alpha=\log |g_{\alpha\beta}|^2+\varphi_\beta$, where $g_{\alpha\beta}$ is a transition function of $L$ on $U_\alpha\cap U_\beta$. 
	Since $\log |g_{\alpha\beta}|^2$ is locally bounded, for any point $x\in U_\alpha\cap U_\beta$, $x\in L(\varphi_\alpha)$ if and only if $x\in L(\varphi_\beta)$.
	Hence, the unbounded locus of $g$ can be defined globally. 
	
	We have another description of $L(g)$. We take an open trivializing covering $\{ U_\alpha\}_{\alpha\in \Lambda}$ of $X$ and set $h|_{U_\alpha}=:e^{\varphi_\alpha}$.
	Then $L(g)=\cup_{\alpha\in \Lambda} L(\varphi_\alpha)$.
	This definition is independent of the choice of open coverings for the same reason as above. 
	Set $B(\varphi_\alpha):=U_\alpha\setminus L(\varphi_\alpha)$, where $\varphi$ is locally bounded, and $B(g)=\cup_{\alpha\in \Lambda}B(\varphi_\alpha)$. 
	We clearly see that $L(g)\cap B(g)=\emptyset$. 
	Indeed, if there is an element $x\in L(g)\cap B(g)$, there exist $\alpha\in \Lambda$ and $\beta\in \Lambda$ such that $x\in L(\varphi_\alpha)$ and $x\in B(\varphi_\beta)$. 
	Note that $x\in U_\alpha \cap U_\beta$. 
	The above argument implies that $x\in L(\varphi_\beta)$ as well, but this contradicts the fact that $L(\varphi_\beta)\cap B(\varphi_\beta)=\emptyset$.
	Hence, $X=L(g)\bigsqcup B(g)$. 
	It is clear that $B(g)$ is an open subset. 
	Therefore, we see that $L(g)$ is a closed subset. 
%
If $E$ is a vector bundle with a Griffiths semi-positive singular Hermitian metric $h$, $(\det E, \det h)$ is semi-positive as well.
Thus we can define the unbounded locus $L(\det h)$ as in Definition \ref{def:unboundedlocus}.

Next, we introduce an $L^2$-estimate of H\"ormander type.
In this paper, we use the following form. 

\begin{theorem}$($\cite{Dem82}, \cite[Chapter VIII, Theorem 6.1]{DemCom}$)$.\label{thm:hormander}
	Let $(X, \widehat{\omega})$ be a complete K\"ahler manifold, $\omega$ be another K\"ahler metric which is not necessarily complete and $(E, h)\to X$ be a Nakano semi-positive vector bundle. 
	Then for any $\dbar$-closed $E$-valued $(n,q)$-form $u$ with $q>0$ and $\int_X \langle [\ai\Theta_{E,h}, \Lambda_\omega]^{-1}u,u\rangle \dV_\omega<+\infty$, there exists a solution of $\dbar \alpha =u$ satisfying 
	$$
	\int_X |\alpha|^2_{\omega,h}\dV_\omega\leq \int_X \langle [\ai\Theta_{E,h}, \Lambda_\omega]^{-1}u,u\rangle_{\omega,h} \dV_\omega,
	$$
	where $\langle \cdot, \cdot \rangle_{\omega, h}$ denotes the pointwise metric with respect to $\omega$ and $h$, $[\cdot, \cdot]$ denotes the graded Lie bracket and $\dV_\omega=\omega^n/n!$.
\end{theorem}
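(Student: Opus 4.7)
The plan is to adapt the classical approach of Hörmander--Andreotti--Vesentini, as refined by Demailly: set up a weighted a priori estimate from the Bochner--Kodaira--Nakano identity, promote it to a solvability statement by Hahn--Banach duality, and handle the possible incompleteness of $\omega$ by a limiting procedure using $\widehat{\omega}$. Concretely, I introduce the family of Kähler metrics $\omega_\epsilon := \omega + \epsilon \widehat{\omega}$ for $\epsilon>0$; each $\omega_\epsilon$ is complete because $\widehat{\omega}$ is, so I carry out the argument on $(X,\omega_\epsilon)$ with uniform bounds and let $\epsilon \downarrow 0$ at the end.

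For fixed $\epsilon$, let $\dbar^*_\epsilon$ be the Hilbert adjoint of $\dbar$ with respect to $\omega_\epsilon$ and $h$. The Bochner--Kodaira--Nakano identity on $(X,\omega_\epsilon)$ yields, for every smooth compactly supported $E$-valued $(n,q)$-form $\alpha$,
\begin{equation*}
\|\dbar \alpha\|^2_{\omega_\epsilon, h} + \|\dbar^*_\epsilon \alpha\|^2_{\omega_\epsilon, h} \;\geq\; \int_X \bigl\langle [\ai\Theta_{E,h}, \Lambda_{\omega_\epsilon}] \alpha, \alpha \bigr\rangle_{\omega_\epsilon, h} \, \dV_{\omega_\epsilon},
\end{equation*}
and Nakano semi-positivity of $h$ makes the integrand non-negative. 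To solve $\dbar v = u$, I consider the linear functional $T(\dbar^*_\epsilon \alpha):=\langle \alpha, u\rangle_{\omega_\epsilon, h}$. Orthogonally decomposing $\alpha=\alpha_1+\alpha_2$ with $\alpha_1\in \ker\dbar$ and $\alpha_2\in(\ker\dbar)^{\perp}$, one checks that $\langle u,\alpha\rangle=\langle u,\alpha_1\rangle$ (since $u\in\ker\dbar$) and $\dbar^*_\epsilon\alpha_1=\dbar^*_\epsilon\alpha$ (because any $\dbar$-exact form is $\dbar$-closed). Combining a pointwise Cauchy--Schwarz with respect to the curvature operator with the above inequality applied to $\alpha_1$ gives
\begin{equation*}
|\langle u, \alpha\rangle|^2 \;\leq\; \Bigl(\int_X \bigl\langle [\ai\Theta_{E,h}, \Lambda_{\omega_\epsilon}]^{-1} u, u\bigr\rangle_{\omega_\epsilon, h} \, \dV_{\omega_\epsilon}\Bigr)\cdot \|\dbar^*_\epsilon \alpha\|^2_{\omega_\epsilon, h}.
\end{equation*}
Density of smooth compactly supported forms in $\mathrm{Dom}(\dbar)\cap\mathrm{Dom}(\dbar^*_\epsilon)$ (in the graph norm) extends the bound to all $\alpha$ in that domain; this density is where completeness of $\omega_\epsilon$ enters, via a sequence of cut-offs whose gradients go to zero. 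Hahn--Banach and Riesz representation then produce $v_\epsilon\in L^2_{n,q-1}(X,E)$ with $\dbar v_\epsilon=u$ and $\|v_\epsilon\|^2_{\omega_\epsilon, h}\leq \int_X \langle [\ai\Theta_{E,h}, \Lambda_{\omega_\epsilon}]^{-1} u, u\rangle_{\omega_\epsilon, h}\,\dV_{\omega_\epsilon}$.

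To conclude, I let $\epsilon\downarrow 0$. The decisive analytic fact is that for $E$-valued $(n,q)$-forms both $|v|^2_{\omega_\epsilon, h}\,\dV_{\omega_\epsilon}$ and $\langle [\ai\Theta_{E,h}, \Lambda_{\omega_\epsilon}]^{-1} u, u\rangle_{\omega_\epsilon, h}\,\dV_{\omega_\epsilon}$ are monotone decreasing in $\epsilon$; this uses the special structure of forms of bidegree $(n,\cdot)$ and is verified pointwise in a frame that simultaneously diagonalizes $\omega$ and $\widehat{\omega}$. The monotonicity gives a uniform $L^2$ bound on $\{v_\epsilon\}$ with respect to any fixed reference metric, and a weakly convergent subsequence produces a limit $v\in L^2_{n,q-1}(X,E;\omega,h)$ with $\dbar v=u$ and the stated estimate. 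The main obstacle is exactly this monotonicity step together with giving a consistent meaning to the integrand $\langle [\ai\Theta_{E,h},\Lambda_{\omega_\epsilon}]^{-1}u,u\rangle$ when the curvature operator is only semi-definite: I would interpret the value as $+\infty$ outside the image of the operator, or circumvent the issue altogether by an auxiliary perturbation $h\mapsto h\, e^{-\delta\varphi}$ (with $\varphi$ a bounded strictly plurisubharmonic exhaustion-like function), which makes the curvature strictly positive, and then take a second limit $\delta\downarrow 0$.
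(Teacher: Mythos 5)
The paper does not prove this theorem; it is cited verbatim from Demailly (\cite{Dem82}, \cite[Chapter~VIII, Theorem~6.1]{DemCom}) and used as a black box, so there is no in-paper argument to compare against. Your sketch reproduces the standard Andreotti--Vesentini--H\"ormander--Demailly proof from exactly those references: replace $\omega$ by the complete metrics $\omega_\epsilon=\omega+\epsilon\widehat{\omega}$, run Bochner--Kodaira--Nakano and the $\ker\dbar$-decomposition/Cauchy--Schwarz estimate on $(X,\omega_\epsilon)$, invoke the density of smooth compactly supported forms in $\mathrm{Dom}(\dbar)\cap\mathrm{Dom}(\dbar^{*}_\epsilon)$ (which is where completeness is used), solve by Hahn--Banach/Riesz, and pass to the limit $\epsilon\downarrow 0$ via the monotonicity in $\gamma$ of $|v|^2_{\gamma,h}\,\dV_\gamma$ and of $\langle[\ai\Theta_{E,h},\Lambda_\gamma]^{-1}u,u\rangle_{\gamma,h}\,\dV_\gamma$ for forms of bidegree $(n,\cdot)$ (Demailly's comparison lemma). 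Your duality step is correctly justified: $\alpha_2\in(\ker\dbar)^\perp\subset(\mathrm{Im}\,\dbar)^\perp=\ker\dbar^{*}_\epsilon$, so indeed $\dbar^{*}_\epsilon\alpha=\dbar^{*}_\epsilon\alpha_1$.

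One caveat about the final paragraph. The auxiliary perturbation $h\mapsto h\,e^{-\delta\varphi}$ needs a bounded strictly plurisubharmonic $\varphi$ on $X$; such a $\varphi$ does not exist in general (for instance, a compact K\"ahler $X$, which is trivially complete, carries no non-constant plurisubharmonic function). The convention you mention first --- reading $\langle[\ai\Theta_{E,h},\Lambda_\omega]^{-1}u,u\rangle$ as the infimum of $\langle[\ai\Theta_{E,h},\Lambda_\omega]w,w\rangle$ over $w$ with $[\ai\Theta_{E,h},\Lambda_\omega]w=u$, set to $+\infty$ when $u$ is not in the image --- is the one Demailly actually uses and works without any auxiliary data, so it should be the primary route rather than a fallback. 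With that adjustment the proof as outlined is correct.
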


Then we recall the following result, which clarifies the relationship between Griffiths positivity and Nakano positivity.

\begin{theorem}$($\cite{DS}, \cite{Ber09}, \cite{LSY13}$)$.\label{thm:ds}
	Let $(E,h)$ be a smooth Hermitian vector bundle. If $(E,h)$ is Griffiths semi-positive, $(S^mE\otimes \det E, S^mh\otimes \det h)$ is Nakano semi-positive for every $m\in \N$, where $S^mE$ is the $m$-th symmetric power of $E$. 
\end{theorem}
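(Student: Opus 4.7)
The strategy is to reduce the global Nakano inequality to a pointwise algebraic assertion at each point $x_{0}\in X$, and to check that assertion by an explicit computation in a normal frame. Fix $x_{0}\in X$ and choose local holomorphic coordinates $(z_{1},\dots,z_{n})$ centred at $x_{0}$ together with a local holomorphic frame $(e_{1},\dots,e_{r})$ of $E$ that is normal at $x_{0}$, i.e.\ $h_{\alpha\bar\beta}(x_{0})=\delta_{\alpha\beta}$ and $dh_{\alpha\bar\beta}(x_{0})=0$. In this frame the Chern curvature at $x_{0}$ reads
$$
\ai\Theta_{E,h}(x_{0})=\sum_{j,k,\alpha,\beta}c_{j\bar k\alpha\bar\beta}\,\ai\,dz_{j}\wedge d\bar z_{k}\otimes e_{\alpha}^{\,*}\otimes e_{\beta},
$$
and the Griffiths semi-positivity hypothesis becomes the scalar inequality
$$
\sum_{j,k,\alpha,\beta}c_{j\bar k\alpha\bar\beta}\,\xi_{j}\bar\xi_{k}\,u_{\alpha}\bar u_{\beta}\ge 0\qquad\text{for all }\xi\in\C^{n},\ u\in\C^{r}.
$$

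The induced frames $\{e_{I}\}_{|I|=m}$ on $S^{m}E$ and $e_{1}\wedge\cdots\wedge e_{r}$ on $\det E$ are again orthonormal at $x_{0}$ with vanishing first jet of the induced metrics, so the Chern curvatures of $S^{m}h$, $\det h$, and $S^{m}h\otimes\det h$ at $x_{0}$ can be read off $(c_{j\bar k\alpha\bar\beta})$ by a purely algebraic application of the Leibniz rule to the underlying tensor constructions. Explicitly, $\ai\Theta_{\det E,\det h}(x_{0})=\sum_{\alpha}c_{j\bar k\alpha\bar\alpha}\,\ai\,dz_{j}\wedge d\bar z_{k}$ (the trace on the $E$-index), while the $S^{m}E$-part is obtained by symmetrizing over the $m$ factors. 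Adding the two contributions yields an explicit quadratic polynomial in the components $\tau_{j,I}$ of a test tensor $\tau\in T_{X,x_{0}}\otimes(S^{m}E\otimes\det E)_{x_{0}}$ that represents $\widetilde{\Theta}_{S^{m}E\otimes\det E,\,S^{m}h\otimes\det h}(\tau,\tau)$. The decisive step, and the main obstacle, is to show that this quadratic form is non-negative using only the Griffiths hypothesis: the point is that $S^{m}E$ equipped solely with $S^{m}h$ is not Nakano semi-positive in general, and the twist by $\det E$ contributes precisely the trace term $\sum_{\alpha}c_{j\bar k\alpha\bar\alpha}$ that rebalances the expression. I would carry this out by symmetrizing the components of $\tau$ across the $m$ symmetric slots together with the antisymmetric determinantal slot, re-indexing so as to express the form as a sum of manifestly non-negative contributions of the type $c_{j\bar k\alpha\bar\beta}\eta_{j\alpha}\bar\eta_{k\beta}$; this combinatorial identity is the algebraic heart of the theorem.

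As a more structural alternative, I would consider Berndtsson's direct image method: pass to a projectivization $\pi\colon Y\to X$ of $E$ (or its dual, in the appropriate convention), on which $h$ induces a smooth metric on the relative $\mathcal{O}(1)$ that is semi-positively curved precisely because $(E,h)$ is Griffiths semi-positive. Realizing $S^{m}E\otimes\det E$ as a direct image $\pi_{*}(\mathcal{O}(m+r)\otimes K_{Y/X})$ whose induced $L^{2}$ metric coincides, up to a positive factor, with $S^{m}h\otimes\det h$, and invoking Berndtsson's theorem on Nakano semi-positivity of direct images, would yield the conclusion without the explicit combinatorics. Either route ultimately confronts the same essential obstacle, namely upgrading a scalar Griffiths-type inequality into a matrix Nakano-type one; the direct-image approach simply packages this difficulty inside Berndtsson's theorem rather than performing it by hand.
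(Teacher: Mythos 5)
The paper does not prove this statement; it is cited as a known result from \cite{DS}, \cite{Ber09}, and \cite{LSY13}, so there is no internal argument of the paper to compare yours against. What can be assessed is whether your proposed routes, if carried out, would yield a proof.

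Your first route (normal frame, then a pointwise algebraic inequality) is the Demailly--Skoda / Liu--Sun--Yang strategy, and the reduction you describe is correct as far as it goes. The gap is exactly where you flag it yourself: the assertion that one can ``re-index so as to express the form as a sum of manifestly non-negative contributions of the type $c_{j\bar k\alpha\bar\beta}\eta_{j\alpha}\bar\eta_{k\beta}$'' is the entire content of the theorem, and it is left unproved. This is not a routine rearrangement; the Nakano quadratic form on $S^{m}E\otimes\det E$ does not split into Griffiths-type terms in any obvious way, and even the $m=1$ argument in \cite{DS} rests on a delicate Cauchy--Schwarz estimate that mixes the tensor indices with the determinantal trace, while the extension to $m\ge 2$ in \cite{LSY13} requires a further layer of work. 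Your second route via $\pi\colon\mathbb{P}(E^{*})\to X$ and the isomorphism $S^{m}E\otimes\det E\cong\pi_{*}\bigl(\mathcal{O}(m+r)\otimes K_{\mathbb{P}(E^{*})/X}\bigr)$ is also a valid plan (and the bundle identification is correct), but it reduces the statement to two substantial unproved facts: Berndtsson's Nakano-positivity theorem for direct images of adjoint line bundles, and the verification that the induced $L^{2}$ metric on the direct image agrees with $S^{m}h\otimes\det h$ up to a universal constant. In both routes you have correctly located the hard step, but neither is carried out, so the proposal as written is a plan rather than a proof.
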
 

In this paper, we call this theorem \textit{a Demaill--Skoda type result} since this type of theorem was initially found by Demailly and Skoda \cite{DS}. 
This result plays a crucial role in the article. 
Then we introduce a notion of a higher rank analogue of a multiplier ideal sheaf.

\begin{definition}$($\cite[Definition 2.3.1]{deC98}$)$.\label{def:multiplier}
	Let $(E,h)$ be a singular Hermitian vector bundle over a complex manifold $X$. 
	Then we define \textit{the higher rank analogue of the multiplier ideal sheaf} $\mathcal{E}(h)$ by 
	$$
	\mathcal{E}(h)_x=\{ s\in \mathcal{O}_X(E)_x \mid |s|^2_h \text{ is locally integrable around }x\in X\},
	$$
	where $\mathcal{O}_X(E)$ is the locally free sheaf associated to $E$.
	Similarly, we define  $\mathcal{S}^m\mathcal{E}(S^mh)$ by 
	$$
	\mathcal{S}^m\mathcal{E}(S^mh)_x=\{ s\in \mathcal{O}_X(S^mE)_x \mid |s|^2_{S^mh} \text{ is locally integrable around }x\in X\}.
	$$
\end{definition}

At the last of this section, we introduce a notion of spherically symmetric singular Hermitian metrics on the trivial vector bundle over the unit ball. 
This is one generalization of $S^1$-invariant singular Hermitian metrics over the disc, which was introduced by Berndtsson \cite{Ber20}. 

\begin{definition}\label{def:sphere}
	Let 
	$E$ be the trivial vector bundle $E=\B^n\times \C^r$ over the unit ball $\B^n$ and $h$ be a Griffiths semi-negative singular Hermitian metric. 
	We say that $h$ is \textit{spherically symmetric} if for any $u\in \C^r$, $|u|_{h(z)}=|u|_{h(|z|)}$ for $z\in \Delta^n$. In other words, $|u|_{h(z)}$ is a radial function for any $u\in V$.
\end{definition}

Berndtsson studied Lelong numbers and integrability indices for $S^1$-invariant singular metrics and obtained many applications in \cite{Ber20}. 
In this sense, a symmetric singular Hermitian metric is an important notion.
We also have another generalization such as $\mathbf{T}^n$-invariant or $S^1$-invariant singular Hermitian metrics, where $\mathbf{T}^n$ is the unit torus in $\C^n$. 
However, since they do not fit the setting of the main theorem, we do not consider them in this article. 

\section{$L^2$-estimates for singular Hermitian metrics}\label{sec:main}

In this section, we show an $L^2$-estimate for singular Hermitian metrics on holomorphic vector bundles and give a proof of the main theorem. 
First, we prove the following result. 

\begin{theorem}\label{thm:symmetricl2}
	Let $(M, \omega)$ be a Stein manifold with a K\"ahler metric $\omega$. 
	We also let $(E=M\times \C^r, h)$ be the trivial holomorphic vector bundle with a Griffiths semi-positive singular Hermitian metric $h$ and $\psi$ be a smooth strictly plurisubharmonic function with $\ai\ddbar \psi \geq \varepsilon \omega$ for a positive constant $\varepsilon>0$. 
	Then for any $\dbar $-closed $S^mE\otimes \det E$-valued $(n,q)$-form $u$ with finite $L^2$-norm, there exists an  $S^mE\otimes \det E$-valued $(n,q-1)$-form $\alpha$ such that $\dbar \alpha =u$ and 
	$$
	\int_M |\alpha|^2_{\omega, S^mh\otimes \det h}e^{-\psi}\dV_\omega \leq \frac{1}{q\varepsilon} \int_M |u|^2_{\omega, S^mh\otimes \det h}e^{-\psi}\dV_\omega.
	$$
\end{theorem}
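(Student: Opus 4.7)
My strategy is to regularize $h$ to a sequence of smooth Griffiths semi-positive metrics, convert Griffiths positivity to Nakano positivity of $S^mE\otimes \det E$ via Theorem \ref{thm:ds}, apply H\"ormander's $L^2$-estimate (Theorem \ref{thm:hormander}) at each smooth level, and finally pass to a weak limit.

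\emph{Step 1: Regularization.} Since $E$ is trivial and $M$ is Stein, the Griffiths semi-negative dual metric $h^\star$ can be approximated, on an exhaustion of $M$ by relatively compact Stein open subsets, by a decreasing sequence $h^\star_\nu\searrow h^\star$ of smooth Griffiths semi-negative metrics, obtained by a standard convolution-type regularization of the matrix-valued log-plurisubharmonic weights associated to $h^\star$. Dualizing yields smooth Griffiths semi-positive $h_\nu$ with $h_\nu\leq h$ and $|v|^2_{h_\nu}\nearrow |v|^2_h$ for every local holomorphic section $v$ of $E$.

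\emph{Step 2: Nakano positivity after twist.} By Theorem \ref{thm:ds}, $(S^mE\otimes \det E,\,S^mh_\nu\otimes \det h_\nu)$ is Nakano semi-positive. Setting $H_\nu:=S^mh_\nu\otimes \det h_\nu\cdot e^{-\psi}$, one has
$$
\ai\Theta_{S^mE\otimes \det E,H_\nu}\geq_{\mathrm{Nak.}}\ai\ddbar\psi\otimes \id\geq \varepsilon\,\omega\otimes \id,
$$
and hence $[\ai\Theta_{H_\nu},\Lambda_\omega]\geq q\varepsilon\cdot \id$ on $S^mE\otimes \det E$-valued $(n,q)$-forms.

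\emph{Step 3: H\"ormander's estimate and weak limit.} Fix a complete K\"ahler metric $\widehat{\omega}$ on $M$, which exists since $M$ is Stein. Theorem \ref{thm:hormander} produces $\alpha_\nu$ with $\dbar\alpha_\nu=u$ satisfying
$$
\int_M|\alpha_\nu|^2_{\omega,H_\nu}\dV_\omega\leq \frac{1}{q\varepsilon}\int_M|u|^2_{\omega,H_\nu}\dV_\omega\leq \frac{1}{q\varepsilon}\int_M|u|^2_{\omega,S^mh\otimes \det h}e^{-\psi}\dV_\omega,
$$
the last inequality using $h_\nu\leq h$. For any fixed $\nu_0$, since $H_{\nu_0}\leq H_\nu$ for $\nu\geq \nu_0$, the sequence $\alpha_\nu$ is uniformly $L^2$-bounded with respect to the smooth metric $H_{\nu_0}$; a standard diagonal weak-compactness argument then produces a limit $\alpha$ solving $\dbar\alpha=u$ with
$$
\int_M|\alpha|^2_{\omega,H_{\nu_0}}\dV_\omega\leq \frac{1}{q\varepsilon}\int_M|u|^2_{\omega,S^mh\otimes \det h}e^{-\psi}\dV_\omega
$$
for every $\nu_0$, by lower semi-continuity of the $L^2$-norm under weak convergence. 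Letting $\nu_0\to\infty$ and invoking monotone convergence $S^mh_{\nu_0}\otimes \det h_{\nu_0}\nearrow S^mh\otimes \det h$ yields the desired estimate.

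\emph{Main obstacle.} The crux is the regularization step: producing a smooth Griffiths semi-positive increasing approximation $h_\nu\nearrow h$ on the entire Stein manifold $M$. Triviality of $E$ is essential here, since it reduces the construction to convolution of the log-plurisubharmonic weights attached to the dual metric. As the introduction notes via \cite{Hos17}, no analogous regularization preserves Nakano semi-positivity directly; this is precisely why the Demailly--Skoda bridge at Step 2 is indispensable.
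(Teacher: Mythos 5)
Your overall strategy — regularize $h$ to smooth Griffiths semi-positive metrics, pass to Nakano positivity of $S^m E\otimes\det E$ via Theorem \ref{thm:ds}, apply Theorem \ref{thm:hormander} at each smooth level, and take a diagonal weak limit with monotone convergence — is the same as the paper's. However, there is a genuine gap in Step~1. You invoke ``a standard convolution-type regularization of the matrix-valued log-plurisubharmonic weights associated to $h^\star$,'' but convolution is only directly available on open subsets of $\C^n$ (this is the setting of \cite[Proposition 3.1]{BP08} and \cite[Proposition 6.2]{Rau15}); a general Stein manifold $M$ has no global linear structure in which to convolve, and local convolutions do not glue to a \emph{global} increasing Griffiths semi-positive sequence. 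The paper resolves this with an essential technical detour: embed $M$ as a closed submanifold of $\C^N$, apply Siu's theorem \cite{Siu76} to obtain a neighborhood $U$ and a holomorphic retraction $p:U\to M$, convolve $p^\star h$ on $U$ to get smooth $g_\nu$, and then set $h_\nu:=\iota^\star g_\nu$. Without this step your regularization claim is unjustified for general Stein $M$.

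A second, related inconsistency: you acknowledge that the regularization is obtained ``on an exhaustion of $M$ by relatively compact Stein open subsets,'' but in Step~3 you nevertheless apply Theorem \ref{thm:hormander} directly on all of $M$. Since each $h_\nu$ is defined (and the approximation is valid) only on a relatively compact subset, the paper works on a fixed $M_j$, performs the diagonal weak-limit argument in $\nu$ to get $\alpha_j$, and then runs a second limiting argument in $j$ to produce the global $\alpha$. Your write-up needs this two-level exhaustion, or else a justification that the $h_\nu$ can be taken globally defined and smooth on all of $M$. Finally, the monotonicity $S^m h_\nu\otimes\det h_\nu\leq S^m h_{\nu+1}\otimes\det h_{\nu+1}$, which you use silently, requires the observation (as in \cite[Lemma 2.2]{KP21}) that the metric on $S^m E$ is that induced by orthogonal projection from $E^{\otimes m}$; it is routine but should be recorded.
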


This type of result was obtained by the author (see \cite[Theorem 1.3]{Ina18} or \cite[Theorem 1.4]{Ina20}).
Since the situation is a little bit different, we give a proof for the sake of completeness.

\begin{proof}[Proof of Theorem \ref{thm:symmetricl2}]
	Fix $m\in \N$. 
	Since $M$ is Stein, $M$ can be embedded into $\C^N$ for some $N>0$.
	We may regard $M$ as a closed submanifold in $\C^N$.
	Let $\iota: M\to \C^N$ be an inclusion map.  
	Thanks to Siu's theorem \cite{Siu76}, there exist an open neighborhood $U$ of $M$ in $\C^N$ and a holomorphic retraction $p: U\to M$ such that $p\circ \iota=\id_M$. 
	Note that $(p^\star E, p^\star h)$ is the trivial vector bundle with a Griffiths semi-positive metric $p^\star h$ as well. 
	From the results of \cite[Proposition 3.1]{BP08} and \cite[Proposition 6.2]{Rau15} 
	we obtain a sequence of smooth Hermitian metrics $\{ g_\nu\}_{\nu=1}^\infty$ with Griffiths semi-positive curvature increasing to $p^\star h$ on any relatively compact subset in $U$. 
	Take an exhaustion $\{ M_j\}_{j=1}^\infty$ of $M$, where each $M_j$ is a relatively compact Stein subdomain in $M$,  $M_j\Subset M_{j+1}$ and $\cup M_j=M$.
	Set $\{ h_\nu:= \iota^\star g_\nu \}_{\nu=1}^\infty$. 
	Then $\{ h_\nu\}_{\nu=1}^\infty$ is an approximate sequence with Griffiths semi-positive curvature increasing to $h$ on any relatively compact subset. 
	Note that each $h_\nu$ is Griffiths semi-positive. 
	Due to Theorem \ref{thm:ds}, $S^mh_\nu\otimes \det h_\nu$ is Nakano semi-positive. 
	The Chern curvature of $S^mh_\nu \otimes \det h_\nu e^{-\psi}$ is calculated as 
	\begin{align*}
		\ai \Theta_{S^mh_\nu \otimes \det h_\nu e^{-\psi}} &=\ai \Theta_{S^mh_\nu \otimes \det h_\nu} + \ai \ddbar \psi\otimes \id_{S^mE\otimes \det E}\\
		&\geq_{\mathrm{Nak.}}\varepsilon \omega\otimes \id_{S^mE\otimes \det E}.
	\end{align*}
We have that for any $S^mE\otimes \det E$-valued $(n,q)$-form $u$
$$
\langle [\ai \Theta_{S^mh_\nu \otimes \det h_\nu e^{-\psi}}, \Lambda_\omega]u, u\rangle_{\omega, {S^mh_\nu \otimes \det h_\nu e^{-\psi}}} \geq q\varepsilon |u|^2_{\omega, {S^mh_\nu \otimes \det h_\nu e^{-\psi}}}.
$$
Now we fix $M_j$.
By using Theorem \ref{thm:hormander} and \cite[Chapter VIII, Remark (4.8)]{DemCom}, 
we get a solution $\alpha_\nu$ of the $\dbar$-equation satisfying
\begin{align*}
\int_{M_j}|\alpha_\nu|^2_{\omega, S^mh_\nu\otimes \det h_\nu}e^{-\psi}\dV_\omega &\leq \frac{1}{q\varepsilon}\int_{M_j} |u|^2_{\omega, S^mh_\nu\otimes \det h_\nu}e^{-\psi}\dV_\omega\\
&\leq \frac{1}{q\varepsilon}\int_{M_j} |u|^2_{\omega, S^mh\otimes \det h}e^{-\psi}\dV_\omega\\
&\leq \frac{1}{q\varepsilon}\int_M |u|^2_{\omega, S^mh\otimes \det h}e^{-\psi}\dV_\omega <+\infty
\end{align*}
for sufficiently large $\nu$. 
Here we use the argument from \cite[Lemma 2.2]{KP21}. 
It holds that the metric on $S^mE$ induced from $E$ is the same as the metric induced by an orthogonal projection from $E^{\otimes m}$. 
Hence, by the monotonicity of $h_\nu$, it follows that $\det h_\nu\leq \det h_{\nu+1}$, $h_\nu^{\otimes m}\leq h^{\otimes m}_{\nu+1}$ and $S^mh_\nu \leq S^mh_{\nu+1}$ as a metric. 

Fix sufficiently large $\nu_0$. 
We have that for $\nu \geq \nu_0$ 
\begin{align*}
\int_{M_j}|\alpha_\nu|^2_{\omega, S^mh_{\nu_0}\otimes \det h_{\nu_0}}e^{-\psi}\dV_\omega &\leq \int_{M_j}|\alpha_\nu|^2_{\omega, S^mh_\nu\otimes \det h_\nu}e^{-\psi}\dV_\omega<+\infty\\
&\leq \frac{1}{q\varepsilon}\int_M |u|^2_{\omega, S^mh\otimes \det h}e^{-\psi}\dV_\omega <+\infty. 
\end{align*}
Then $\{ \alpha_\nu\}_{\nu\geq \nu_0}$ forms a bounded sequence with respect to the norm $\int_{M_j}|~\cdot~|^2_{\omega, S^mh_{\nu_0}\otimes \det h_{\nu_0}}e^{-\psi}\dV_\omega$.
We can get a weakly convergent subsequence $\{ \alpha_{\nu_0, k}\}_k$. 
Thus, the weak limit $\alpha_j$ satisfies 
$$
\int_{M_j}|\alpha_j|^2_{\omega, S^mh_{\nu_0}\otimes \det h_{\nu_0}}e^{-\psi}\dV_\omega\leq \frac{1}{q\varepsilon}\int_M |u|^2_{\omega, S^mh\otimes \det h}e^{-\psi}\dV_\omega <+\infty.
$$
Next, we fix $\nu_1 > \nu_0$. 
Repeating the above argument, we can choose a weakly convergent subsequence $\{ \alpha_{\nu_1, k}\}_k \subset \{ \alpha_{\nu_0, k}\}_k$ with respect to $\int_{M_j}|~\cdot~|^2_{\omega, S^mh_{\nu_1}\otimes \det h_{\nu_1}}e^{-\psi}\dV_\omega$. 
Then by taking a sequence $\{ \nu_n\}_n$ increasing to $+\infty$ and a diagonal sequence, we obtain a weakly convergent sequence $\{ \alpha_{\nu_k, k}\}_k$ with respect to $\int_{M_j}|~\cdot~|^2_{\omega, S^mh_{\nu_\ell}\otimes \det h_{\nu_\ell}}e^{-\psi}\dV_\omega$ for all $\ell$. 
Hence, $\alpha_j$ satisfies 
$$
\int_{M_j}|\alpha_j|^2_{\omega, S^mh\otimes \det h}e^{-\psi}\dV_\omega\leq \frac{1}{q\varepsilon}\int_M |u|^2_{\omega, S^mh\otimes \det h}e^{-\psi}\dV_\omega
$$
thanks to the monotone convergence theorem. 
Since the right-hand side of the above inequality is independent of $j$, by using the exactly same argument, we can get an $S^mE\otimes \det E$-valued $(n,q-1)$-form $\alpha$ satisfying $\dbar \alpha =u$ and 
$$
\int_M |\alpha|^2_{\omega, S^mh\otimes \det h}e^{-\psi}\dV_\omega\leq \frac{1}{q\varepsilon}\int_M |u|^2_{\omega, S^mh\otimes \det h}e^{-\psi}\dV_\omega,
$$
which completes the proof. 
\end{proof}

We call this theorem \textit{a singular version of a Demailly--Skoda type result}.
Indeed, $S^mh\otimes \det h$ behaves like a Nakano semi-positive metric (cf. Theorem \ref{thm:ds}). 
Applying this estimate, we can prove the main theorem.

\begin{proof}[Proof of Theorem \ref{thm:coherence}]
	We use the same notation as in the previous section and fix $m\in \N$.
	Since the coherence is a local property, we may assume that $X=\Delta^n_r$ is a polydisc in $\C^n_{\{ z_1, \cdots, z_n\}}$, $E=\Delta^n_r\times \C^r$ and $(E=\Delta^n_r\times \C^r, h)$ is defined over a larger polydisc $\Delta^n_{r'}$. Here $r$ and $r'$ are positive constants satisfying $r<r'$, and $\Delta^n_r=\{ (z_1, \cdots, z_n)\in \C^n\mid |z_i|<r \text{ for all } i\in \{ 1, \cdots, n\} \}$.
	We also assume that $S^mE$ is trivial over $\Delta^n_{r'}$.
	We regard $\log \det h^\star$ as a function on $\Delta^n_{r'}$ which is the local weight of $\det h^\star$, that is, $\det h^\star|_{\Delta^n_{r'}}=e^{\log \det h^\star}$.
	We can also assume that, on this trivializing coordinate, $L(\log \det h^\star)=o$ since the unbounded locus is the set of isolated points.
	
	Let $H^0_{2, S^mh}(\Delta^n_r, S^mE)$ be the space of holomorphic sections $s$ of $S^mE$ on $\Delta^n_r$ such that $\int_{\Delta^n_r}|s|^2_{S^mh}\dl<+\infty$, where $\dl$ is the standard Lebesgue measure on $\C^n$. 
	We consider the natural evaluation map $\mathrm{ev}:H^0_{2, S^mh}(\Delta^n_r, S^mE)\otimes_{\C}\mathcal{O}_{\Delta^n_r}\to \mathcal{O}_{\Delta^n_r}(S^mE)$. 
	We know that $\Ima(\mathrm{ev})=:\mathfrak{E}$ is coherent. 
	We now prove that $\mathfrak{E}_x=\mathcal{S}^m\mathcal{E}(S^mh)_x$ for all $x\in \Delta^n_r$. 
	Since $\mathfrak{E}\subset \mathcal{S}^m\mathcal{E}(S^mh)$, it is enough to show that $\mathcal{S}^m\mathcal{E}(S^mh)_x\subset \mathfrak{E}_x$.
	
	Take an arbitrary element $f\in \mathcal{S}^m\mathcal{E}(S^mh)_x$.
	When $x\neq o$, we take a cut-off function $\theta$ around $x$. 
	Here $\theta$ is a smooth function with compact support such that $0\leq \theta \leq 1$ and $\theta\equiv 1$ on an open neighborhood of  $x$. 
	We may assume that $f$ is defined on a small open neighborhood $U\Subset \Delta^n_r\setminus \{ o\}$ of $x$ such that $\int_U|f|^2_{S^mh}\dl<+\infty$ and $\supp(\theta)\Subset U$.
	Hence, $\theta f$ is defined globally. 
	Set $u:=\dbar(\theta f\dz )$, where $\dz=\dz_1\wedge\cdots\wedge\dz_n$. 
	Note that  
	$$
	\int_{\Delta^n_r}|u|^2_{S^mh}\dl=\int_U |\dbar \theta|^2|f|^2_{S^mh}\dl<+\infty.
	$$
	Moreover, we have that 
	$$
	\int_{\Delta^n_r}|u|^2_{S^mh}\det h e^{-(n+k)\log |z-x|^2-|z|^2}\dl=\int_U |\dbar \theta|^2|f|^2_{S^mh}e^{-\log \det h^\star -(n+k)\log |z-x|^2- |z|^2}\dl <+\infty
	$$
	for any $k\in \N$ since $\log \det h^\star$ is bounded on $U$ and $\dbar\theta$ is identically zero around $x$. 
	Set $\eta_\delta:=\log (|z-x|^2+\delta^2)$ and $\eta=\log |z-x|^2$.
	We have that $\ai\ddbar((n+k)\eta_\delta+|z|^2)\geq \ai\sum_i \dz_i\wedge \dbarz_i$.
	Applying Theorem \ref{thm:hormander}, we get a solution $\alpha_\delta$ of $\dbar( \alpha_\delta \dz) =u$ satisfying 
	\begin{align*}
	\int_{\Delta^n_r}|\alpha_\delta|^2_{S^mh}\det h e^{-(n+k)\eta_\delta-|z|^2}\dl &\leq \int_{\Delta^n_r}|u|^2_{S^mh}\det h e^{-(n+k)\eta_\delta-|z|^2}\dl\\
	&\leq \int_{\Delta^n_r}|u|^2_{S^mh}\det h e^{-(n+k)\eta-|z|^2}\dl<+\infty
	\end{align*}
	Since the upper bound is independent of $\delta$, thanks to the standard $L^2$ theory of the $\dbar$-equation (cf. the proof of Theorem \ref{thm:symmetricl2} or \cite[Theorem 2.3]{HI20}), we get a sequence $\{ \delta_j\}_{j\in \N}$ decreasing to $0$, a sequence $\{ \alpha_{\delta_j}\}_{j\in \N}$ converging weakly with respect to $S^mh\det he^{-(n+k)\eta_{\delta_j}-|z|^2}$ for all $j\in\N$ and the limit $\alpha$ satisfying $\dbar (\alpha\dz)=u$ and 
	$$
	\int_{\Delta^n_r}|\alpha|^2_{S^mh}\det h e^{-(n+k)\eta-|z|^2}\dl \leq \int_{\Delta^n_r}|u|^2_{S^mh}\det h e^{-(n+k)\eta-|z|^2}\dl.
	$$
	Note that $\log\det h^\star$, $\eta$ and $|z|^2$ are bounded above on $\Delta^n_r$. 
	Thus, 
	$$
	\int_{\Delta^n_r} |\alpha|^2_{S^mh}\dl<+\infty \text{ and } 
	\int_{\Delta^n_r} \frac{|\alpha|^2_{S^mh}}{|z-x|^{2(n+k)}}\dl<+\infty. 
	$$
	Then $\alpha_x\in \mathcal{S}^m\mathcal{E}(S^mh)_x$. 
	Since $S^mh$ is Griffiths semi-positive, there exists a positive constant $C>0$ such that $|\alpha|^2_{S^mh} \geq C|\alpha|^2=C(|\alpha_1|^2+\cdots +|\alpha_{r_m}|^2)$ on $\Delta^n_r$, where $r_m=\mathrm{rank}(S^mE)$ and $\alpha={}^t(\alpha_1, \cdots, \alpha_{r_m})$. 
	Thus, 
	$$
	\int_{\Delta^n_r} \frac{|\alpha_i|^2}{|z-x|^{2(n+k)}}\dl<+\infty
	$$
	for each $i$. 
	Set $F:=\alpha-\theta f$. 
	Then $F\in H^0_{2, S^mh}(\Delta^n_r, S^mE)$ and $\alpha_{i, x}\in \mathfrak{m}_x^{k+1}$, where $\mathfrak{m}_x$ is the maximal ideal of $\mathcal{O}_{\Delta^n_r, x}$.
	
	When $x=o$, the situation changes. 
	Let $\theta$ be a cut-off function around the origin, which is identically $1$ around $o$. 
	Take $f\in \mathcal{S}^m\mathcal{E}(S^mh)_o$, $U$ and $u$ in the same way. 
	We only need to verify that the following integral is finite 
	$$
	\int_U |\dbar \theta|^2|f|^2_{S^mh}e^{-\log \det h^\star -(n+k)\log |z|^2- |z|^2}\dl
	$$
	since $\log \det h^\star$ is not bounded on $U$. 
	Note that the support of $\dbar \theta$ is a compact subset in $U\setminus \{ o\}$. 
	We also see that $\log \det h^\star$ and $\log |z|^2$ are bounded on the support of $\dbar \theta$. 
	Hence, the above integral is finite. 
	Then repeating the above argument, we get a solution $\alpha $ of $\dbar (\alpha\dz)=u$ satisfying 
	$$
	\int_{\Delta^n_r}|\alpha|^2_{S^mh}\det h e^{-(n+k)\log |z|^2-|z|^2}\dl \leq \int_{\Delta^n_r}|u|^2_{S^mh}\det h e^{-(n+k)\log |z|^2-|z|^2}\dl.
	$$
	The rest is the same. 
	
	Eventually, in both cases, we obtain that $\theta f=\alpha-F$, that is, $f_x=\alpha_x-F_x$ for $x\in \Delta^n_r$.  
	The above argument implies that $\alpha_x\in \mathfrak{m}_x^{k+1}\cdot \mathcal{O}_{\Delta^n_r}(S^mE)_x\cap \mathcal{S}^m\mathcal{E}(S^mh)_x$ for all $k\in \N$.
	Hence, 
	$$
	\mathcal{S}^m\mathcal{E}(S^mh)_x=\mathfrak{m}_x^{k+1}\cdot \mathcal{O}_{\Delta^n_r}(S^mE)_x\cap \mathcal{S}^m\mathcal{E}(S^mh)_x+ \mathfrak{E}_x.
	$$
	Thanks to the Artin--Rees lemma, there exists a positive integer $\ell$ such that for any $k\geq \ell-1$
	$$
	\mathfrak{m}_x^{k+1}\cdot \mathcal{O}_{\Delta^n_r}(S^mE)_x\cap \mathcal{S}^m\mathcal{E}(S^mh)_x=\mathfrak{m}_x^{k-\ell+1}(\mathfrak{m}_x^{\ell}\cdot \mathcal{O}_{\Delta^n_r}(S^mE)_x\cap \mathcal{S}^m\mathcal{E}(S^mh)_x).
	$$
	Thus, for $k\geq \ell$
	\begin{align*}
		\mathcal{S}^m\mathcal{E}(S^mh)_x&=\mathfrak{m}_x^{k+1}\cdot \mathcal{O}_{\Delta^n_r}(S^mE)_x\cap \mathcal{S}^m\mathcal{E}(S^mh)_x+ \mathfrak{E}_x\\
		&=\mathfrak{m}_x^{k-\ell+1}(\mathfrak{m}_x^{\ell}\cdot \mathcal{O}_{\Delta^n_r}(S^mE)_x\cap \mathcal{S}^m\mathcal{E}(S^mh)_x)+ \mathfrak{E}_x\\
		&\subset \mathfrak{m}_x \cdot\mathcal{S}^m\mathcal{E}(S^mh)_x+ \mathfrak{E}_x\\
		&\subset \mathcal{S}^m\mathcal{E}(S^mh)_x.
	\end{align*}
Nakayama's lemma says that $\mathcal{S}^m\mathcal{E}(S^mh)_x=\mathfrak{E}_x$, which completes the proof. 

\end{proof}

We then give a proof of Corollary \ref{cor:sphere} as an application of the main theorem. 

\begin{proof}[Proof of Corollary \ref{cor:sphere}]
	First, we prove that $\det h$ is radial if $h$ is spherically symmetric. 
	Since $E=\B^n\times \C^r$ is trivial, taking a global holomorphic frame, we write $h=(h_{i\bar{j}})_{1\leq i,j\leq r}$. 
	Akin to \cite{PT18}, we compute each element $h_{i\bar{j}}$. 
	For $v_1={}^t(1, 0, \cdots, 0)$, $|v_1|^2_h=h_{1\bar{1}}$ is a radial plurisubharmonic function with $h_{1\bar{1}}(z)\in [0, +\infty)$. 
	We also have that $|v_2|_h=h_{2\bar{2}}$ is a radial plurisubharmonic function for $v_2={}^t(0,1,0,\cdots,0)$. 
	For $v={}^t(1,1,0,\cdots, 0)$ and $v'={}^t(1,\ai,0,\cdots,0)$, we get $|v|^2_h=h_{1\bar{1}}+h_{2\bar{2}}+2\Rea(h_{1\bar{2}})$ and $|v'|^2_{h}=h_{1\bar{1}}+h_{2\bar{2}}-2\Ima(h_{1\bar{2}})$, respectively. 
	We have that $|v|^2_h$ and $|v'|^2_h$ are also radial due to the assumption of $h$. 
	Thus, $h_{1\bar{2}}$ is spherically symmetric. 
	Consequently, we can say that every element $h_{i\bar{j}}$ is spherically symmetric, and so is $\log \det h$. 
	
	Therefore, it is enough to give a proof in the case that $\log \det h$ is a radial plurisubharmonic function. 
	The rest part follows due to the standard argument below (cf. \cite[Chapter I, Section 5]{DemCom}). 
	Set $H:=\{ w\in \C \mid \Rea(w)<0 \}$.
	Define the map $\exp:H\to \B^n\setminus \{o\}$ by $w\mapsto (e^{w},0, \cdots, 0)$.
	Then $\log \det h \circ \exp$ is a plurisubharmonic function on $H$, which is independent of $\Ima(w)$. 
	Thus, $x\in (-\infty, 0)\mapsto \log\det h(e^{x},0,\cdots,0)$ is a convex function.
	We denote this map by $\mu$. 
	We then have that $\log \det h(z)=\mu (\log |z|)$ for $z\in \B^n\setminus \{ o\}$. 
	We can conclude that $L(\log \det h)\subset \{ o\}$.  
	Since the unbounded locus $L(\det h)$ of $\det h$ is isolated or empty, this corollary holds. 
	
\end{proof}

At the last of this section, we introduce a example satisfying the condition in Corollary \ref{cor:sphere}.
This type of example was introduced in \cite{InaJGA}.

\begin{example}
	Let $h$ be a singular Hermitian metric on $E=\B^n\times \C^2$ defined by 
	$$
	h= \left(
	\begin{array}{cc}
	|z_1|^2 + |z|^{N} & z_1 \\
	\overline{z}_1 & 1
	\end{array}
	\right)
	$$
	for sufficiently large $N>0$. 
	Then $h$ is Griffiths semi-negative since for any local holomorphic section $u={}^t(u_1, u_2)$
	$$
	|u|^2_h=|u_1z_1+u_2|^2+|u_1|^2|z|^N.
	$$
	We have that $\det h=|z|^N$. Thus $\mathcal{S}^m\mathcal{E}^\star(S^mh^\star)$ is coherent for every $m\in \N$ thanks to Corollary \ref{cor:sphere}. 
	Note that $\mathcal{E}^\star(h^\star)\neq \mathcal{O}_{\B^2}(E^\star)$ due to the assumption of $N$. 
\end{example}

\appendix
\section{On Riemann surfaces}
In this appendix, we discuss singular Hermitian metrics on a holomorphic vector bundle over a Riemann surface. 
If $\dim X=1$, the situation is quite different.
Actually, on Riemann surfaces, Griffiths positivity is equivalent to Nakano positivity by definition. 
Repeating the argument in the proof of Theorem \ref{thm:symmetricl2}, we have the following result.
\begin{theorem}$($cf. \cite[Proposition 5.2]{Ina20}$)$.
	Let $(M, \omega)$ be a non-compact Riemann surface. 
	We also let $(E=M\times \C^r, h)$ be the trivial vector bundle with a Griffiths semi-positive singular Hermitian metric and $\psi$ be a smooth strictly plurisubharmonic function with $\ai\ddbar \psi\geq \varepsilon\omega$ for some positive constant $\varepsilon>0$. 
	Then for any $E$-valued $(1,1)$-form $u$ with finite $L^2$-norm, there exists a $E$-valued $(1,0)$-form $\alpha$ such that $\dbar \alpha=u$ and 
	$$
	\int_M |\alpha|^2_{\omega,h}e^{-\psi}\dV_\omega\leq \frac{1}{\varepsilon}\int_M |u|^2_{\omega,h}e^{-\psi}\dV_\omega.
	$$
\end{theorem}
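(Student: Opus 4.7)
My plan is to mirror the argument of Theorem \ref{thm:symmetricl2}, exploiting the dimension hypothesis in two essential ways. First, since $\dim_{\C} M = 1$, the tangent space $T_M$ is a line bundle, and at each point every non-zero element of $T_M \otimes E$ is of the form $v \otimes s$ with $v$ spanning $T_M$. Consequently Griffiths and Nakano positivity coincide for smooth Hermitian metrics on $E$, so there is no need to invoke a Demailly--Skoda type construction: the given metric $h$ will already be approximated by smooth Nakano semi-positive metrics. Second, every $E$-valued $(1,1)$-form on $M$ is automatically of top degree and hence $\dbar$-closed, so no closedness hypothesis on $u$ is needed to apply Theorem \ref{thm:hormander}.

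The key steps go as follows. A non-compact Riemann surface is Stein by the Behnke--Stein theorem, so the approximation scheme used in the proof of Theorem \ref{thm:symmetricl2} applies verbatim: embed $M$ as a closed submanifold of $\C^N$, invoke Siu's theorem to obtain a holomorphic retraction $p: U \to M$, pull $h$ back to $(p^\star E, p^\star h)$, and apply the standard convolution approximation on relatively compact subsets of $U$ together with the results of \cite{BP08} and \cite{Rau15} to produce a sequence of smooth Griffiths (and hence, by the first observation, Nakano) semi-positive metrics $h_\nu := \iota^\star g_\nu$ increasing to $h$ on any relatively compact subset of $M$. Fix an exhaustion $\{M_j\}$ of $M$ by relatively compact Stein subdomains. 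For each $\nu$ and $j$, the curvature computation
\[
\ai \Theta_{E, h_\nu e^{-\psi}} = \ai \Theta_{E, h_\nu} + \ai \ddbar \psi \otimes \id_E \geq_{\mathrm{Nak.}} \varepsilon \omega \otimes \id_E
\]
combined with $q = 1$ gives the pointwise inequality
\[
\langle [\ai \Theta_{E, h_\nu e^{-\psi}}, \Lambda_\omega] u, u \rangle_{\omega, h_\nu e^{-\psi}} \geq \varepsilon |u|^2_{\omega, h_\nu e^{-\psi}},
\]
and Theorem \ref{thm:hormander} together with \cite[Chapter VIII, Remark (4.8)]{DemCom} supplies a solution $\alpha_\nu$ of $\dbar \alpha_\nu = u$ on $M_j$ with $L^2$-bound $\tfrac{1}{\varepsilon}\int_{M_j}|u|^2_{\omega, h_\nu}e^{-\psi}\dV_\omega \leq \tfrac{1}{\varepsilon}\int_{M}|u|^2_{\omega, h}e^{-\psi}\dV_\omega$, using the monotonicity $h_\nu \leq h$.

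To pass to the limit I reuse the diagonal weak-compactness argument from the proof of Theorem \ref{thm:symmetricl2}: by monotonicity $h_\nu \leq h_{\nu+1}$, the sequence $\{\alpha_\nu\}_{\nu \geq \nu_0}$ is bounded in $L^2(M_j, h_{\nu_0}e^{-\psi})$ for any fixed $\nu_0$; extracting weak limits along a sequence $\nu_0 < \nu_1 < \cdots \to \infty$ and taking a diagonal subsequence yields a limit $\alpha_j$ on $M_j$ satisfying
\[
\int_{M_j} |\alpha_j|^2_{\omega, h} e^{-\psi} \dV_\omega \leq \frac{1}{\varepsilon} \int_M |u|^2_{\omega, h} e^{-\psi} \dV_\omega
\]
by the monotone convergence theorem. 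Since the right-hand side is independent of $j$, a second diagonal extraction across the exhaustion $\{M_j\}$ produces the desired global $E$-valued $(1,0)$-form $\alpha$ on $M$ with $\dbar \alpha = u$ and the stated estimate.

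I do not anticipate any essential obstacle beyond bookkeeping, since the collapse of Griffiths into Nakano positivity on a curve makes the curvature estimate immediate and the only delicate ingredient, the diagonal weak-convergence scheme, transfers from Theorem \ref{thm:symmetricl2} without modification; the main point to verify carefully is the Steinness of non-compact Riemann surfaces, which is classical.
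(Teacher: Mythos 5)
Your proposal is correct and is essentially the same route the paper takes: the paper simply states that the result follows by repeating the argument of Theorem \ref{thm:symmetricl2}, with the key simplification (which you correctly identify) that on a Riemann surface Griffiths and Nakano positivity coincide, so the Demailly--Skoda twist by $S^m$ and $\det$ is unnecessary. Your additional observations --- that non-compact Riemann surfaces are Stein by Behnke--Stein and that $(1,1)$-forms are automatically $\dbar$-closed in complex dimension one --- are exactly the implicit ingredients needed to carry over the approximation and diagonal weak-compactness scheme verbatim.
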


This $L^2$-estimate immediately implies the following theorem. 

\begin{theorem}
	Let $(E,h)\to S$ be a holomorphic vector bundle with a Griffiths semi-positive singular Hermitian metric $h$ over a Riemann surface $S$. 
	Then $\mathcal{E}(h)$ is coherent. 
\end{theorem}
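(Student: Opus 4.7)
The plan is to mimic the argument of Theorem~\ref{thm:coherence} but applied to $(E,h)$ itself rather than to $S^mE\otimes \det E$. The point is that on a one-dimensional base, any Hermitian form on $T_X\otimes E$ is determined by its values on simple tensors, so Griffiths semi-positivity coincides with Nakano semi-positivity for smooth metrics, and after approximation the $L^2$-estimate just stated applies to $(E,h)$ directly. This removes both the need for the Demailly--Skoda trick and, consequently, the weight $\log\det h^\star$ whose singularities forced the isolated unbounded locus hypothesis in Theorem~\ref{thm:coherence}; no condition on the singularities of $h$ beyond Griffiths semi-positivity is required.

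After a local reduction I may assume $S=\Delta_r\subset \C$ and $E=\Delta_r\times \C^r$ is trivial over a slightly larger disc $\Delta_{r'}$. Let $H^0_{2,h}(\Delta_r,E)$ be the space of holomorphic sections $s$ with $\int_{\Delta_r}|s|^2_h\,\dl<+\infty$, and form the evaluation map $\mathrm{ev}:H^0_{2,h}(\Delta_r,E)\otimes_\C \mathcal{O}_{\Delta_r}\to \mathcal{O}_{\Delta_r}(E)$; its image $\mathfrak{E}:=\Ima(\mathrm{ev})$ is coherent and clearly contained in $\mathcal{E}(h)$, so what remains is $\mathcal{E}(h)_x\subset \mathfrak{E}_x$ at every $x\in \Delta_r$. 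Fix such an $x$, pick $f\in \mathcal{E}(h)_x$, choose a cutoff $\theta$ identically $1$ near $x$ with $\supp(\theta)\Subset U$ where $\int_U|f|^2_h\,\dl<+\infty$, and set $u:=\dbar(\theta f\,\dz)$. Since $|u|^2_h=|\dbar\theta|^2|f|^2_h$ vanishes near $x$, for every $k\in \N$ and $\delta>0$ the smooth weight $\psi_\delta:=(1+k)\log(|z-x|^2+\delta^2)+|z|^2$ satisfies $\ai\ddbar\psi_\delta\geq \omega:=\ai\,\dz\wedge\dbarz$, and applying the Riemann surface $L^2$-estimate of this appendix produces $\alpha_\delta$ with $\dbar(\alpha_\delta\dz)=u$ and
\begin{equation*}
\int_{\Delta_r}|\alpha_\delta|^2_h e^{-\psi_\delta}\,\dl \leq \int_{\Delta_r}|u|^2_h e^{-(1+k)\log|z-x|^2-|z|^2}\,\dl <+\infty,
\end{equation*}
whose right-hand side is independent of $\delta$. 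The weak-limit and diagonal procedure used in Theorem~\ref{thm:symmetricl2} then extracts a global $\alpha$ with $\dbar(\alpha\dz)=u$ and the same estimate with $\delta=0$.

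Because $\log|z-x|^2$ and $|z|^2$ are bounded above on $\Delta_r$, this yields $\int_{\Delta_r}|\alpha|^2_h/|z-x|^{2(1+k)}\,\dl<+\infty$; using the Griffiths semi-positivity of $h$ to compare $|\alpha|^2_h$ componentwise with the Euclidean norm, exactly as in the proof of Theorem~\ref{thm:coherence}, I conclude $\alpha_x\in \mathfrak{m}_x^{k+1}\cdot\mathcal{O}(E)_x$. Setting $F:=\alpha-\theta f$ gives a holomorphic section lying in $H^0_{2,h}(\Delta_r,E)$, so $f_x\in \mathfrak{m}_x^{k+1}\cdot\mathcal{O}(E)_x+\mathfrak{E}_x$ for every $k$. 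The Artin--Rees and Nakayama lemmas then close the argument verbatim as in the final lines of Theorem~\ref{thm:coherence}. The only step requiring real care is the weak-limit extraction of $\alpha$, which must be combined with the monotone smooth approximation of $h$ used in Theorem~\ref{thm:symmetricl2}, so one runs a double diagonal argument over both the smoothing parameter and $\delta$; the details are mechanical once the uniform bounds are in place, but this is where the proof could slip if written carelessly.
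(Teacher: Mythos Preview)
Your proof is correct and is precisely what the paper intends: the appendix merely asserts that the Riemann surface $L^2$-estimate ``immediately implies'' the coherence, and your write-up is the natural fleshing out of that implication, rerunning the Artin--Rees/Nakayama scheme of Theorem~\ref{thm:coherence} with $h$ in place of $S^mh\otimes\det h$ and no $\det h$ weight to worry about. One minor simplification: since that $L^2$-estimate is already stated for the singular metric $h$, only the limit in $\delta$ is needed, so the double diagonal you flag collapses to a single one.
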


This theorem may be already-known for some experts. 
However, to emphasize that the case that $\dim X=1$ is special, we want to note the above result explicitly.



\end{document}